\newtheorem{theorem}{Theorem}[section]
\newtheorem{lemma}[theorem]{Lemma}
\theoremstyle{definition}
\newtheorem{example}[theorem]{Example}
\theoremstyle{remark}
\newtheorem{remark}[theorem]{Remark}
\begin{document}

\title{A two-fold cover of strongly regular graphs with spreads and association schemes of class five}
\date{\today}

\author{
Sho Suda \\ {\small Department of Mathematics Education,  Aichi University of Education}\\ {\small 1 Hirosawa, Igaya-cho, Kariya, Aichi 448-8542, Japan} \\ {\small \texttt{suda@auecc.aichi-edu.ac.jp}}}

\maketitle
\abstract{A spread of strongly regular graphs is a partition of the vertex set by Delsarte cliques. 
We consider imprimitive association schemes of class four which are two-fold covers of strongly regular graphs with spreads. 
It will be shown that a two-fold cover of a strongly regular graph with a spread provides a five class fission scheme of the imprimitive scheme of class four.}

\section{Introduction}
A spread of a strongly regular graph is a set of disjoint cliques attaining the Delsarte bound.
This concept arises from a partial geometry. 
A spread in a partial geometry is a set of disjoint lines which cover all the points. 
The collinearity graph is a strongly regular graph and the spread corresponds to a set of disjoint cliques meeting the Delsarte bound. 

A strongly regular graph is a symmetric association scheme of class two.
If a strongly regular graph has a spread, then the set of edges is decompose into two sets depending on the edges being in a Delsarte clique in the spread or not. 
In \cite{B}, Brouwer showed that the strongly regular graphs obtained from partial geometries with spreads yield distance regular graphs with diameter $3$. 
In \cite[Proposition 4.1]{HT}, Haemers and Tonchev showed the same statement for strongly regular graphs with a spread  and conversely the three class association scheme with the same eigenmatrix has a fusion strongly regular graph with a spread. 

In this paper we consider a two-fold cover of strongly regular graphs with spreads. 
Here by a two-fold cover of an association scheme we mean  the imprimitive association schemes with a non-identity binary relation $R$ of valency one whose quotient by the equivalence relation the union of the identity relation and $R$ coincides with the given association scheme. 
We deal with the $4$-class case and consider the subsets whose quotient corresponds to Delsarte cliques in the strongly regular graph obtained by its quotient. 
The main theorem is that for $4$-class case, if the vertex set is decomposed into subsets whose quotient is a spread, then the scheme has a fission $5$-class association scheme. 

Finally we give examples obtained by the extended codes of dual of narrow-sense BCH codes of designed distance $5$. 
The code is a linear code with the all ones vector and with only $4$ weights and it is also an orthogonal array with strength at least $5$. 
The code has a structure of a $Q$-polynomial association scheme by a recent theorem of Bannai-Bannai in \cite{BB} applying to the binary Hamming schemes and its linear code having the all ones vector and strength $t$ as an orthogonal array and the degree $s$ satisfying $t\geq 2s-3$. 
The code can be decomposed into subsets whose quotient is a spread of the strongly regular graph. 
The main theorem implies that it yields a $5$-class association scheme.  
After the embedding of the vertex set by a suitable primitive idempotent, 
the partition corresponding to a spread is a union of disjoint cross polytopes and their antipodal ones. 
Such a spherical code is essentially same as mutually unbiased weighing matrices \cite{HKO,BKR}.  

\section{Preliminaries}
\subsection{Association schemes}
We first review the theory of association schemes.  
The reader is referred to \cite{BI} for more information. 
Let $X$ be a finite set of size $n$ and $R_i$ a nonempty binary relation on $X$ for $i=0,1,\ldots,d$.  
The pair $\mathfrak{X}=(X,\{R_i\}_{i=0}^d)$ is an association scheme of class $d$ if the following hold:
\begin{enumerate}
\item $R_0=\{(x,x)\mid x\in X\}$.
\item $\{R_i\mid i=0,1,\ldots,d\}$ is a partition of $X\times X$.
\item $\{(y,x)\mid (x,y)\in R_i\}=R_i$ for any $i=1,\ldots,d$.
\item For any $i,j,k\in \{0,1,\ldots,d\}$ there exists an integer $p_{i,j}^k$ such that for any $x,y\in X$ with $(x,y)\in R_k$, 
\begin{align*}
p_{i,j}^k=|\{z\in X\mid (x,z)\in R_i,(z,y)\in R_j\}|.
\end{align*} 
\end{enumerate}
The association schemes is usually called symmetric. 

For each binary relation $R_i$, we define $A_i$ as a $(0,1)$-matrix indexed by $X$ with $(x,y)$-entry $1$ if and only if $(x,y)\in R_i$. 
The condition for association schemes is characterized by the adjacency matrices. 
The pair $\mathfrak{X}=(X,\{R_i\}_{i=0}^d)$ is an association scheme if and only if the adjacency matrices satisfy the following 
\begin{enumerate}
\item $A_0=I_n$, where $I_n$ is the identity matrix of size $n$.
\item $\sum_{i=0}^d A_i=J_n$, where $J_n$ is the all ones matrix of size $n$.
\item $A_i^T=A_i$ for any $i=1,\ldots,d$, where $A_i^T$ denotes the transpose of $A_i$.
\item For any $i,j,k\in \{0,1,\ldots,d\}$ there exists an integer $p_{i,j}^k$ such that 
\begin{align*}
A_i A_j =\sum_{k=0}^d p_{i,j}^k A_k.
\end{align*}
\end{enumerate}
By the condition (iv), the adjacency matrices generate the algebra. 
This is called the Bose-Mesner algebra or the adjacency algebra, denoted by $\mathcal{A}$. 
Since $\mathcal{A}$ is generated by symmetric matrices,  $\mathcal{A}$ is commutative.
Thus there exists a basis consisting of primitive idempotens $E_0=\frac{1}{|X|}J,E_1,\ldots,E_d$. 
We then have change-of-bases matrices $P,Q$, called first and second eigenmatrices respectively, as follows:
\begin{align*}
A_i=\sum_{j=0}^d P_{j i}E_j, \quad E_i=\frac{1}{|X|}\sum_{j=0}^d Q_{j i}A_j.
\end{align*}
The values $k_i:=P_{0,i}$ and $m_i:=Q_{0,i}$ are called the valency of $A_i$ or $R_i$ and the multiplicity of $E_i$ respectively.

In this paper we focus on the association schemes satisfying following theorem.
\begin{theorem}\cite[Theorem 9.3]{BI}\label{thm:im}
The following are equivalent.
\begin{enumerate}
\item A graph $(X,R_i)$ is not connected for some $i\in\{1,\ldots,d\}$. 
\item $\sum_{i\in \mathcal{I}}A_i=p \sum_{j\in \mathcal{J}}E_j=I_q\otimes J_p$ for some integer $p,q\geq 2$ and $\mathcal{I}\subset \{0,1,\ldots,d\}$ and $\mathcal{J}\subset \{0,1,\ldots,d\}$
\end{enumerate} 
\end{theorem}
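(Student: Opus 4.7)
The plan is to treat the two implications separately, with the core idea being that the connected components of a relation graph of an association scheme form an equivalence relation that is visible inside the Bose-Mesner algebra $\mathcal{A}$.

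For (i) $\Rightarrow$ (ii), I would start with the observation that if $(X,R_i)$ is disconnected, then its connected component relation is an equivalence relation on $X$ whose characteristic matrix lies in $\mathcal{A}$. To make this precise, I would consider the matrix $M=(I_n+A_i)^{n-1}$: it belongs to $\mathcal{A}$, and its $(x,y)$-entry is strictly positive exactly when $x$ and $y$ are in the same connected component of $(X,R_i)$. Writing $M=\sum_{k=0}^d c_k A_k$ with $c_k\ge 0$ and setting $\mathcal{I}=\{k:c_k>0\}$, I would let $B=\sum_{k\in\mathcal{I}}A_k$; this is the $(0,1)$-matrix recording the connected-component relation. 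Since $B\in\mathcal{A}$, each row sum of $B$ is the same integer $p$, so all components have the common size $p$, and a direct computation gives $B^2=pB$. Disconnectedness and the presence of the edge relation $R_i$ force $q\ge 2$ components and $p\ge 2$, so after relabeling vertices $B=I_q\otimes J_p$. Because $p^{-1}B\in\mathcal{A}$ is an idempotent, it must be a sum of primitive idempotents, giving $\mathcal{J}\subset\{0,1,\ldots,d\}$ with $p^{-1}B=\sum_{j\in\mathcal{J}}E_j$, which is exactly the equality in (ii).

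For (ii) $\Rightarrow$ (i), I would argue by contradiction / direct unpacking. Given $\sum_{i\in\mathcal{I}}A_i=I_q\otimes J_p$ with $p,q\ge 2$, the right-hand side is block diagonal with $q$ diagonal blocks of size $p$, and since each $A_i$ is a $(0,1)$-matrix, every $A_i$ with $i\in\mathcal{I}$ must vanish outside these blocks. The index $0$ alone cannot exhaust $\mathcal{I}$ (otherwise $p=1$), so there is some $i\in\mathcal{I}\setminus\{0\}$; the graph $(X,R_i)$ then has no edge across blocks, so it has at least $q\ge 2$ connected components.

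The main obstacle is really the first direction, and within it the key non-trivial step is arguing that the component-indicator matrix lies in $\mathcal{A}$ and that all equivalence classes have the same size $p$. Both follow quickly once one notices that powers of $I_n+A_i$ stay inside $\mathcal{A}$ and that membership in $\mathcal{A}$ forces row regularity; the remaining passage from the idempotent $p^{-1}B$ to a sum $\sum_{j\in\mathcal{J}}E_j$ is a standard application of the spectral decomposition $\mathcal{A}=\bigoplus_{j=0}^d\mathbb{C}E_j$ together with the fact that the $E_j$ are the only idempotents that are mutually orthogonal and sum to $I_n$.
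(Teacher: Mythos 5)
Your argument is correct and is essentially the standard proof of this result; the paper itself gives no proof but simply cites Bannai--Ito, Theorem~9.3, and your route (using $(I_n+A_i)^{n-1}\in\mathcal{A}$ to show the component relation lies in the Bose--Mesner algebra, deducing constant class size from row-regularity, and identifying the normalized idempotent $p^{-1}B$ with a sum of primitive idempotents) is exactly the textbook argument. No gaps worth flagging; at most you could note explicitly that $0\in\mathcal{I}$ because the diagonal of $M$ is positive, which is what makes $B$ reflexive.
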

An association scheme is said to be imprimitve if one of the conditions in Theorem~\ref{thm:im} holds. 

For an imprimitive association scheme with a set $\mathcal{I}$ appearing in Theorem~\ref{thm:im}, we define a quotient scheme with respect to $\mathcal{I}$. 
The set $\mathcal{I}$ is an equivalence relation on $\{0,1,\ldots,d\}$ by $j\sim k$ if and only if $p_{i,j}^k\neq 0$ for some $i\in \mathcal{I}$.  
Let $\mathcal{I}_0=\mathcal{I},\mathcal{I}_1,\ldots,\mathcal{I}_t$ be the equivalent classes on $\{0,1,\ldots,d\}$ by $\sim$. 
Then by \cite[Theorem~9.4]{BI} there exist $(0,1)$-matrices $\overline{A}_j$ ($0\leq j\leq t$) such that 
\begin{align*}
\sum_{i\in \mathcal{I}_j}A_i=\overline{A}_j\otimes J_p,
\end{align*}
and the matrices $\overline{A}_j$ ($0\leq j\leq t$) define an association scheme $\overline{\mathfrak{X}}$. 
The association scheme $\overline{\mathfrak{X}}$ is called the quotient association scheme of $\mathfrak{X}$ with respect to $\mathcal{I}$ or equivalently $\cup_{i\in \mathcal{I}}R_i$. 
The binary relation whose adjacency matrix is $\overline{A}_i$ is denoted by $\overline{R}_i$. 
   
If $\mathcal{I}=\{0,i\}$ for some $i$ and the valency  of $R_i$ is one, then we say the association scheme $\mathfrak{X}$ is a two-fold cover of $\overline{\mathfrak{X}}$.  
   
\subsection{A two-fold cover of strongly regular graphs}
In this section, we consider imprimitive association schemes of class $4$, which is a two-fold cover of a strongly regular graph. 
We will see the relation between their second eigenmatrices. 

Let $\mathfrak{X}=(X,\{R_i\}_{i=0}^4)$ be an imprimitive association scheme such that there is a binary relation $R_j$ ($1\leq j\leq 4$) satisfying  the valency of $R_j$ is one and $R_0\cup R_j$ is an equivalence relation on the set of indices.
Let us rearrange the indices so that $R_4$ is the binary relation above and the equivalent classes on the indices are $\mathcal{I}_0=\{0,4\}$, $\mathcal{I}_1=\{1,3\}$, $\mathcal{I}_2=\{2\}$. 

Since the association scheme $(X,\{R_i\}_{i=0}^4)$ is imprimitive, there exists a quotient association scheme with respect to the equivalence relation $R_0\cup R_4$.  
Let  $\overline{\mathfrak{X}}=(\overline{X},\{\overline{R}_i\}_{i=0}^2)$ be the quotient association scheme.
We denote adjacency matrices and primitive idempotents by $\overline{A}_i$ and $\overline{E}_i$ ($0\leq i\leq 2$) respectively.
Renumber the indices on adjacency matrices of the quotient scheme as follows: 
\begin{align*}
A_0+A_4=\overline{A}_0\otimes J_2,\quad A_1+A_3=\overline{A}_1\otimes J_2,\quad A_2=\overline{A}_2\otimes J_2.
\end{align*}
Then $A_0+A_4,A_1+A_3,A_2$ generate a subalgebra of the Bose-Mesner algebra of $\mathfrak{X}$. 
Letting $\overline{E}_0,\overline{E}_1,\overline{E}_2$ be the primitive idempotents of $\overline{\mathfrak{X}}$, we rearrange the ordering the primitive idempotents of $\mathfrak{X}$ as follows: 
\begin{align*}
E_0=\frac{1}{2}\overline{E}_0\otimes J_2,\quad E_1=\frac{1}{2}\overline{E}_1\otimes J_2,\quad E_2=\frac{1}{2}\overline{E}_2\otimes J_2.
\end{align*}

From the binary relation $R_4$, we have the following natural bijection on $X$.
\begin{lemma}\label{lem:bij}
There exists a bijection $\phi:X\rightarrow X$ such that
if $(x,y)\in R_i$ $(x,\phi(y))\in R_{4-i}$ for any $i\in\{0,1,\ldots,4\}$ and any $x\in X$.
\end{lemma}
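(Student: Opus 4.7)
The plan is to define $\phi$ from $R_4$ and translate the stated property into an identity $A_iA_4=A_{4-i}$ in the Bose--Mesner algebra, which I verify case-by-case; the only nontrivial case is settled by the eigenmatrix $P$.

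First, I set $\phi(x)$ to be the unique element with $(x,\phi(x))\in R_4$; this is well-defined and an involutive bijection because $R_4$ has valency one and $A_4$ is symmetric. Then $A_4$ is the permutation matrix of $\phi$ and $(A_iA_4)(x,y)=A_i(x,\phi(y))$, so the lemma is equivalent to $A_iA_4=A_{4-i}$ for every $i\in\{0,1,2,3,4\}$. Since $A_iA_4$ lies in the Bose--Mesner algebra and is a $(0,1)$-matrix, it equals $\sum_k p^k_{i,4}A_k$ with each $p^k_{i,4}\in\{0,1\}$. Moreover $(x,y)$ and $(x,\phi(y))$ have the same image in the quotient scheme, so only those $p^k_{i,4}$ with $k$ in the same equivalence class $\mathcal{I}_\ell$ as $i$ can be nonzero. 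This immediately yields $A_0A_4=A_4$, $A_4A_4=A_0$, and $A_2A_4=A_2$, covering $i\in\{0,2,4\}$. For $i\in\{1,3\}$, comparing row sums shows that $A_iA_4$ is a single $A_k$ with $k\in\{1,3\}$, so it remains to decide whether $A_1A_4$ equals $A_1$ or $A_3$.

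To rule out $A_1A_4=A_1$, I use the first eigenmatrix $P$. From $A_0+A_4=2(E_0+E_1+E_2)$ one reads off $P_{j,4}=1$ for $j\in\{0,1,2\}$ and $P_{j,4}=-1$ for $j\in\{3,4\}$. Expanding $A_iA_4=A_{g(i)}$ in primitive idempotents yields $P_{j,g(i)}=P_{j,4}P_{j,i}$ for all $j$. If $A_iA_4=A_i$ were to hold for every $i\in\{1,2,3\}$, then $P_{j,i}=0$ for all $j\in\{3,4\}$ and $i\in\{1,2,3\}$, so rows $3$ and $4$ of $P$ would both equal $(1,0,0,0,-1)$, contradicting the invertibility of $P$. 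Hence $A_1A_4=A_3$ and $A_3A_4=A_1$, finishing the proof. This last step is the main obstacle: the equivalence-class bookkeeping does not by itself determine whether $\phi$ fixes $R_1$ or swaps $R_1$ with $R_3$, and genuine spectral input is required to exclude the degenerate labelling.
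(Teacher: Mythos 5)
Your proof is correct, and since the paper states this lemma without any proof (it is presented as ``natural''), your write-up actually supplies an argument the paper omits. The reduction to $A_iA_4=A_{4-i}$, the observation that $A_iA_4$ is a $(0,1)$-matrix because $A_4$ is a symmetric permutation matrix, and the row-sum count forcing $A_1A_4\in\{A_1,A_3\}$ are all sound, as is the eigenvalue argument ($P_{j,4}=-1$ for $j=3,4$ would force two equal rows $(1,0,0,0,-1)$ in $P$). However, your closing claim --- that ``the equivalence-class bookkeeping does not by itself determine whether $\phi$ fixes $R_1$ or swaps $R_1$ with $R_3$, and genuine spectral input is required'' --- is not right. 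The paper's standing hypothesis is that the equivalence classes of $\sim$ (defined by $j\sim k$ iff $p_{i,j}^k\neq 0$ for some $i\in\mathcal{I}_0=\{0,4\}$) are $\{0,4\},\{1,3\},\{2\}$. Since $p_{0,1}^3=0$, the relation $1\sim 3$ means precisely that $p_{4,1}^3\neq 0$, and combined with your own row-sum count (exactly one of $p_{1,4}^1,p_{1,4}^3$ equals $1$) this yields $A_1A_4=A_3$ immediately, with no spectral input. So the eigenmatrix step is a valid but unnecessary detour; the bookkeeping you dismissed already settles the one case you thought it could not.
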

We extend this map $\phi$ to $\mathbb{R}^X=\mathbb{R}^{2n}$ or the matrix algebra of size $2n$ over $\mathbb{R}$, denoted by also $\phi$, as the multiplication of $I_{n}\otimes(J_2-I_2)$ from the left.

Let $Q$ ($\overline{Q}$ resp.) be the second eigenmatrix of $\mathfrak{X}$ ($\overline{\mathfrak{X}}$ resp.).
Set $\overline{Q}$ as 
\begin{align*}
\overline{Q}=\begin{pmatrix}
1&m &n-m-1\\
1&r &-r-1\\
1&s &-s-1
\end{pmatrix}
\end{align*} 
with $r>s$.
We have then the following theorem which provides a relation between second eigenmatrices of $\mathfrak{X}$ and $\overline{\mathfrak{X}}$.
\begin{theorem}
The second eigenmatrix $Q$ of $\mathfrak{X}$ is given as follows:
\begin{align*}
Q=\begin{pmatrix}
1&m &n-m-1 &m_3&m_4\\
1&r &-r-1& m_3 \alpha_3&m_4 \alpha_4\\
1&s &-s-1&0&0\\
1&r &-r-1& -m_3 \alpha_3&-m_4 \alpha_4\\
1&m &n-m-1 &-m_3&-m_4
\end{pmatrix}, 
\end{align*}
where $m_3,m_4,\alpha_3,\alpha_4$ satisfy the equations $m_3+m_4=n$, $m_3\alpha_3+m_4\alpha_4=0$, $k\alpha_3 \alpha_4=-1$ and $k$ is the valency of $R_1$.  
\end{theorem}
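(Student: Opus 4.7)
The plan is to exploit that $A_4$ is an involution on $\mathbb{R}^X=\mathbb{R}^{2n}$ whose $\pm 1$-eigenspaces split the five primitive idempotents into the three ``old'' ones coming from the quotient and two ``new'' ones. I will compute the rows of the first eigenmatrix $P$ corresponding to the two new idempotents directly, convert them to columns of $Q$ via the standard duality $m_i P_{ij} = k_j Q_{ji}$, and separately compute the first three columns of $Q$ from the identification $E_i = \tfrac{1}{2}\overline{E}_i \otimes J_2$ for $i=0,1,2$. The three scalar constraints on $m_3,m_4,\alpha_3,\alpha_4$ will then drop out of standard row-sum and column-orthogonality relations of $Q$.

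For the setup, note that $A_4 = I_n \otimes (J_2 - I_2)$ satisfies $A_4^2 = I_{2n} = A_0$, so $A_4$ has eigenvalues $\pm 1$ of multiplicity $n$ each. From
\[
A_0 + A_4 = \overline{A}_0 \otimes J_2, \quad A_1 + A_3 = \overline{A}_1 \otimes J_2, \quad A_2 = \overline{A}_2 \otimes J_2,
\]
together with the fact that $J_2$ annihilates the $(-1)$-eigenvectors of $J_2 - I_2$, each of these three matrices acts as zero on the $(-1)$-eigenspace of $A_4$. The three idempotents $\tfrac{1}{2}\overline{E}_i \otimes J_2$ already account for the $n$-dimensional $(+1)$-eigenspace, so the remaining primitive idempotents $E_3,E_4$ must lie in the $(-1)$-eigenspace; in particular $m_3+m_4=n$. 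Also, Lemma~\ref{lem:bij} (or the identity $A_1 A_4 = A_3$ derived from it) gives $k_1 = k_3 =: k$, matching the hypothesis.

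On the $(-1)$-eigenspace the preceding vanishings force, for $i \in \{3,4\}$, the values $P_{i4} = -1$ (since $A_4 E_i = -E_i$), $P_{i0} = 1$, $P_{i2} = 0$, and $P_{i1} = -P_{i3}$. Writing $P_{i1} = k\alpha_i$, the $i$-th row of $P$ reads $(1, k\alpha_i, 0, -k\alpha_i, -1)$, and the duality $Q_{ji} = m_i P_{ij}/k_j$ with $k_0 = k_4 = 1$ and $k_1 = k_3 = k$ translates this into the last two columns of the stated matrix. For the first three columns, substituting $n\overline{E}_i = \sum_{j'} \overline{Q}_{j'i} \overline{A}_{j'}$ into $2n E_i = n\overline{E}_i \otimes J_2$ and using the three pullback identities above yields column $i$ of $Q$ as $(\overline{Q}_{0i}, \overline{Q}_{1i}, \overline{Q}_{2i}, \overline{Q}_{1i}, \overline{Q}_{0i})^T$, which matches the claimed form.

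It remains to read off the two remaining constraints. The vanishing of the row-$1$ sum of $Q$ (equivalent to $\mathrm{tr}(A_1) = 0$) gives
\[
0 = 1 + r + (-r-1) + m_3\alpha_3 + m_4\alpha_4 = m_3\alpha_3 + m_4\alpha_4,
\]
and the column orthogonality $\sum_j k_j Q_{j3} Q_{j4} = 0$ evaluates to $2 m_3 m_4 (1 + k\alpha_3\alpha_4) = 0$, which forces $k\alpha_3\alpha_4 = -1$. I expect the only real obstacle to be bookkeeping, namely keeping the $P/Q$ conventions straight and checking that the pullback identities cleanly diagonalize $\mathcal{A}$ along the $A_4$-eigenspaces; once that splitting is in hand, the claimed form of $Q$ is essentially forced by the axioms of a symmetric association scheme.
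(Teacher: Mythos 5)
Your proposal is correct and follows essentially the same route as the paper: pull back the first three columns of $Q$ from the quotient scheme, identify $E_3,E_4$ with the $(-1)$-eigenspace of $A_4$ (the paper phrases this via the map $\phi$, which is exactly multiplication by $I_n\otimes(J_2-I_2)=A_4$), convert eigenvalues to $Q$-entries by the duality $Q_{ji}=m_iP_{ij}/k_j$, and extract the three constraints from the orthogonality relations of $P$ and $Q$. Your derivation of $m_3+m_4=n$ by a dimension count of the $(-1)$-eigenspace, rather than from the $(1,1)$-entry of $QP=2nI$, is a minor and equally valid variation.
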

\begin{proof}
From the block form of $E_i$ for $i=0,1,2$ we have 
\begin{align*}
E_i&=\overline{E}_i\otimes \frac{1}{2}J_2\\
&=\frac{1}{2n}(\sum_{j=0}^2\overline{Q}_{j i}A_j)\otimes J_2\\
&=\frac{1}{2n}\sum_{j=0}^2\overline{Q}_{j i}(A_j\otimes J_2)\\
&=\frac{1}{2n}(Q_{0i}(A_0+A_4)+Q_{1i}(A_1+A_3)+Q_{2i}A_2), 
\end{align*}
 which implies first three columns of $Q$ with the ordering of adjacency matrices $A_0,A_1,\ldots,A_4$ are determined as desired. 
 
By the formula of $\overline{E}_i$ ($i=0,1,2$), we have 
\begin{align*}
E_0 \mathbb{R}^{2n}+E_1 \mathbb{R}^{2n}+E_2 \mathbb{R}^{2n}=\{(x,x)^T\mid x\in\mathbb{R}^n\}. 
\end{align*} 
Thus the eigenspaces $E_3, E_4$ satisfy 
 \begin{align*}
E_3 \mathbb{R}^{2n}+E_4 \mathbb{R}^{2n}=\{(x,-x)^T\mid x\in\mathbb{R}^n\}. 
\end{align*} 
Let $(x,-x)^T$ be an eigenvector of $A_i$ with eigenvalue $a$. 
Applying $\phi$ to the equation $A_i(x,-x)^T=a(x,-x)^T$, we have $A_{4-i}(x,-x)^T=-a(x,-x)^T$ because  $\phi(A_i)=A_{4-i}$ and $\phi(x,-x)^T=-(x,-x)^T$. 
By the equations $Q_{ij}/m_j=P_{ji}/k_i$ \cite[Theorem~3.5]{BI} we may set the third and fourth columns of $Q$ as in the theorem.  
We now check that these parameters $m_3,m_4,\alpha_3,\alpha_4$ satisfy the equations in theorem.

Compare $(1,1)$-, $(2,1)$-entries of the equation $Q P=2n I$, we have $m_3+m_4=n$ and $m_3\alpha_3+m_4\alpha_4=0$.

Let $\Delta_k$ and $\Delta_m$ be the diagonal matrices whose $i$-th entry is $k_i$ and $m_i$ respectively. 
Then $Q^T \Delta_k Q=|X|\Delta_m$ holds.
Comparing $(4,5)$-entry of its equation yields $k\alpha_3 \alpha_4=-1$. 
This completes the proof.
\end{proof}

\section{Complete bipartite subsets and their decomposition}
Let $Y$ be a nonempty subset of $X$ and let $\chi$ be the characteristic vector of $Y$. 
For a subset $I$ of $\{0,1,\ldots,4\}$ containing $0$, we say that $Y$ is a $I$-clique if 
\begin{align*}
\{i\mid R_i\cap(Y\times Y)\neq\emptyset\}=I.
\end{align*}
We will consider $\{0,2,4\}$-cliques of the association schemes $\mathfrak{X}=(X,\{R_i\}_{i=0}^4)$ and decompositions of the vertex set $X$ by such cliques. 

First we see the upper bound of $\{0,2,4\}$-cliques in the next lemma. 
The proof is due to the Hoffman-Delsarte clique bound.
\begin{lemma}\label{lem:clique}
Let $Y$ be a $\{0,2,4\}$-clique. 
Then $|Y|\leq 2(1-P_{0,2}/\theta)$ holds where $\theta=\min\{P_{1,2},P_{2,2}\}$.
If equality holds, then the size of $\{y \in Y\mid (y,x)\in R_i\}$ depends only on $i$, not on the particular choice of $x\in  X\setminus Y$. 
\end{lemma}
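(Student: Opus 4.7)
The plan is to reduce the bound to the quotient strongly regular graph $\overline{\mathfrak X}$ and apply the classical Hoffman--Delsarte clique bound there. If $Y \subseteq X$ is a $\{0,2,4\}$-clique, then its image $\overline Y \subseteq \overline X$ under the quotient by $R_0 \cup R_4$ is a $\{0,2\}$-clique in $\overline{\mathfrak X}$, because a $\overline R_1$-edge in $\overline Y$ would lift to an $R_1$- or $R_3$-edge in $Y$, contradicting the hypothesis. Since $R_4$ has valency one, every fiber of the quotient has size two, so $|Y| \le 2|\overline Y|$ with equality iff $Y$ is a union of fibers.

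Next I would apply the Hoffman--Delsarte clique bound to $\overline Y$, viewed as a clique in the graph $(\overline X,\overline R_2)$, obtaining $|\overline Y| \le 1 - \overline k_2/\overline\tau$, where $\overline k_2$ is the valency of $\overline R_2$ and $\overline\tau$ is the smallest eigenvalue of $\overline A_2$. To rewrite this in terms of $P$ I would use $A_2 = \overline A_2 \otimes J_2$: since $J_2$ has eigenvalues $2$ and $0$ on the symmetric and antisymmetric parts of $\mathbb R^2$, and $E_0,E_1,E_2$ lift from $\overline E_0,\overline E_1,\overline E_2$ through the symmetric part while $E_3,E_4$ live in the antisymmetric part, the eigenvalues of $A_2$ on $E_0,E_1,E_2$ are exactly twice those of $\overline A_2$ on $\overline E_0,\overline E_1,\overline E_2$, and they vanish on $E_3,E_4$. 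Hence $\overline k_2 = P_{0,2}/2$ and $\overline\tau = \theta/2$, so combining with $|Y| \le 2|\overline Y|$ yields $|Y| \le 2(1 - P_{0,2}/\theta)$.

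For the equality case, both inequalities must be tight, so $Y$ is a union of fibers and $\overline Y$ is a regular (Delsarte) clique in $\overline{\mathfrak X}$, meaning $|\{\overline y \in \overline Y : (\overline y,\overline x) \in \overline R_i\}|$ is independent of $\overline x \in \overline X \setminus \overline Y$. For each $x \in X \setminus Y$ one has $\overline x \notin \overline Y$, and by the fiber structure: every $\overline R_2$-neighbor of $\overline x$ in $\overline Y$ contributes both of its preimages to the $R_2$-neighborhood of $x$; every $\overline R_1$-neighbor contributes one preimage to the $R_1$-neighborhood and, via the involution $\phi$ of Lemma~\ref{lem:bij} which swaps $R_1 \leftrightarrow R_3$, the other preimage to the $R_3$-neighborhood of $x$; and the $R_0$- and $R_4$-counts are zero because neither $x$ nor $\phi(x)$ lies in $Y$. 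The required constancy in $x$ therefore descends from the regular-clique property in the quotient. The main obstacle is this equality-case bookkeeping: one has to use the $\phi$-symmetry $R_i \leftrightarrow R_{4-i}$ together with the valency-one condition on $R_4$ to ensure the $R_1$ and $R_3$ contributions from each $\overline R_1$-neighbor fiber balance exactly, so that the constancy of the counts survives the lift.
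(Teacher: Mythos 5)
Your proposal is correct and follows essentially the same route as the paper: pass to the quotient strongly regular graph, apply the Hoffman--Delsarte bound to the $\{0,2\}$-clique $\overline Y$, and in the equality case combine the complete regularity of the Delsarte clique with the fiber structure and the $\phi$-symmetry $R_1\leftrightarrow R_3$ to lift the constancy of the intersection numbers. Your explicit factor-of-two eigenvalue bookkeeping and per-fiber counting are just slightly more detailed versions of the paper's equations \eqref{eq:R1}--\eqref{eq:R3}.
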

\begin{proof}
Let $\overline{Y}$ be a subset of $\overline{X}$ corresponding to $Y$ by the quotient map from $X$ to $\overline{X}$. 
Then $Y$ is a $\{0,2\}$-clique and it clearly holds that $|Y|\leq 2|\overline{Y}|$. 
Applying the Hoffman-Delsarte clique bound to $\{0,2\}$-clique $\overline{Y}$, we have $\overline{Y}\leq 1-P_{0,2}/\theta$. 
Thus we obtain the inequality $|Y|\leq 2(1-P_{0,2}/\theta)$. 

Equality holds if and only if $|Y|=2|\overline{Y}|$ and $|\overline{Y}|=1-P_{0,2}/\theta$ hold. 
The condition $|Y|=2|\overline{Y}|$ is equivalent to the valency of $R_2$ at each point in  $Y$ being $1$. 
The condition $|\overline{Y}|=1-P_{0,2}/\theta$ is equivalent to $\overline{Y}$ being a Delsarte clique with respect to the binary relation $\overline{R}_2$. 

From the quotient map from $X$ to $\overline{X}$, we have the following equalities for any $x\in X$ and its quotient image $\overline{x}\in\overline{X}$:
\begin{align}
& |R_1(x)\cap Y|+|R_3(x)\cap Y|=2|\overline{R}_1(\overline{x})\cap \overline{Y}|, \label{eq:R1}\\
& |R_2(x)\cap Y|=2|\overline{R}_2(\overline{x})\cap \overline{Y}| \label{eq:R2}
\end{align}

Since $R_2$ gives a matching on $Y$, $Y=\phi(Y)$ holds.  
Thus, for any $x\in X$, we have 
\begin{align}\label{eq:R3}
|R_1(x)\cap Y|=|\phi(R_1(x)\cap Y)|=|\phi(R_{1}(x))\cap \phi(Y)|=|R_{3}(x)\cap Y|.
\end{align}
Since $\overline{Y}$ is a Delsarte clique, it is a complete regular code in the strongly regular graph, which is same as the quotient scheme $\mathfrak{X}$, namely it holds that, for any $\overline{x}\in \overline{X}\setminus\overline{Y}$ and $i\in\{1,2\}$, 
$|\overline{R}_i(\overline{x}) \cap\overline{Y}|$ is independent of the choice of $\overline{x}$.
This properties of \eqref{eq:R1}, \eqref{eq:R2} and \eqref{eq:R3} imply that for any $x\in X\setminus Y$ and $i=1,2,3$, $|R_i(x)\cap Y|$ is independent of the choice of $x$.
\end{proof}

Next we consider the situation that the vertex set $X$ is decomposed into disjoint maximal $\{0,2,4\}$-cliques. 
Let $Y_1,\ldots,Y_f$ be $\{0,2,4\}$-cliques attaining the bound in Lemma~\ref{lem:clique} such that $X=\cup_{i=1}^f Y_i$ and $Y_i\cap Y_j=\emptyset$ for any distinct $i,j$. 
According to the partition $\{Y_1,\ldots,Y_f\}$ of $X$, we define the following refinement $\tilde{R}_i$ ($0\leq i\leq 5$) of the binary relations:
\begin{align*}
\tilde{R}_i&=R_i \text{ for } i=0,1,3,4,\\
\tilde{R}_2&=R_2\cap \bigcup_{i=1}^f (Y_i\times Y_i),\\
\tilde{R}_5&=R_2\setminus \tilde{R}_2.
\end{align*}
We denote by $\tilde{A}_i$ the adjacency matrices corresponding to $\tilde{R}_i$ for $i=0,1,\ldots,5$.  
The following is the main theorem, which provides that the refinement defines a fission association scheme of $\mathfrak{X}$. 
\begin{theorem}\label{thm:asc5}
The pair $\tilde{\mathfrak{X}}=(X,\{\tilde{R}_i\}_{i=0}^5)$ is an association scheme.
\end{theorem}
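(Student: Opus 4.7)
The plan is to show that the linear span of $\tilde A_0,\ldots,\tilde A_5$ is a commutative matrix algebra closed under multiplication; because these are symmetric $(0,1)$-matrices summing to $J$ with $\tilde A_0=I$, this is equivalent to verifying the four adjacency-algebra axioms of an association scheme.

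The only new matrix compared with the original Bose--Mesner algebra $\mathcal{A}=\langle A_0,\ldots,A_4\rangle$ is $\tilde A_2$ (together with $\tilde A_5=A_2-\tilde A_2$). I would package it into
\[
B=\tilde A_0+\tilde A_2+\tilde A_4=I+A_4+\tilde A_2,
\]
and identify $B$ as the characteristic matrix of the equivalence relation ``lying in the same $Y_i$''. Indeed, since each $Y_i$ is a $\{0,2,4\}$-clique one has $Y_i\times Y_i\subseteq R_0\cup R_2\cup R_4$; combined with $Y_i=\phi(Y_i)$ and the fact from the equality case of Lemma~\ref{lem:clique} that $R_2$ restricts to a perfect matching on $Y_i$, this identifies $B$ (after reordering) with $I_f\otimes J_{|Y|}$, where $|Y|$ is the common size of the cliques. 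In particular $B^2=|Y|\,B$, and the span $\mathcal{B}$ of $\{\tilde A_i\}_{i=0}^{5}$ equals $\langle I,A_1,A_2,A_3,A_4,B\rangle$.

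The key computation is $BA_i$ for each $i\in\{0,1,\ldots,4\}$. From
\[
(BA_i)_{x,y}=|R_i(y)\cap Y_{j(x)}|,
\]
with $Y_{j(x)}$ the unique clique containing $x$, I would split on whether $y\in Y_{j(x)}$ (handled by the $\{0,2,4\}$-clique structure together with $Y_{j(x)}=\phi(Y_{j(x)})$) or $y\notin Y_{j(x)}$ (handled by Lemma~\ref{lem:clique}). The outcome will be identities $BA_i=c_iB+d_i(J-B)$ for scalars $c_i,d_i$ depending only on $i$, and these lie in $\mathcal{B}$ because $J=\sum_{i=0}^{4}A_i$. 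Together with $B^2=|Y|B$, the closure of $\mathcal{A}$, and the direct verification $A_iB=BA_i$ (by the same case split with $x$ and $y$ swapped), this proves that $\mathcal{B}$ is a commutative subalgebra. Rewriting the basis via $\tilde A_2=B-I-A_4$ and $\tilde A_5=A_2-\tilde A_2$ then identifies $\mathcal{B}$ as the Bose--Mesner algebra of $\tilde{\mathfrak X}$.

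The point I expect to require most care is the uniformity of $d_i$ across cliques: Lemma~\ref{lem:clique} only guarantees that $|R_i(y)\cap Y|$ is constant in $y\notin Y$ for a \emph{fixed} $Y$, whereas the identity above needs the value to be independent of which $Y_{j(x)}$ is used. A short double count of pairs in $Y_{j(x)}\times(X\setminus Y_{j(x)})$ lying in $R_i$, using the values of $|R_i(z)\cap Y_{j(x)}|$ for $z\in Y_{j(x)}$ read off from the $\{0,2,4\}$-clique and matching properties, expresses $d_i$ purely in terms of $|Y|$, $|X|$ and $k_i$, and hence depends only on $i$. After that, the closure verification is a routine expansion.
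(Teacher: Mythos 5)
Your proof follows essentially the same route as the paper's: both reduce closure of the span to the computation of $B\tilde A_i$ for $B=\tilde A_0+\tilde A_2+\tilde A_4=I_f\otimes J_{|Y|}$, using the equality case of Lemma~\ref{lem:clique} for the off-clique entries and $B^2=|Y|B$ from the block form. Your explicit double count showing that the constant $d_i$ does not depend on which clique $Y_{j(x)}$ is used is a detail the paper leaves implicit (all cliques attain the same bound, hence have equal size and identical internal intersection numbers), but it is a genuine and correctly resolved point, so the argument goes through.
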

\begin{proof}
Let $\tilde{\mathcal{A}}$ be the vector space $\text{span}(\tilde{A}_0,\tilde{A}_1,\ldots,\tilde{A}_5)$ over $\mathbb{R}$. 
In order to show that $\tilde{\mathcal{A}}$ is closed under the matrix multiplication, it is enough to check that $\tilde{A}_2\tilde{A}_i$ and $\tilde{A}_5\tilde{A}_i$ are in $\tilde{\mathcal{A}}$ for $i=1,\ldots,5$.  

First it is obvious that $\tilde{A}_2\tilde{A}_4=\tilde{A}_2$ because $\tilde{A}_2$ corresponds to a partition  of $\{0,2,4\}$-cliques.

By Lemma~\ref{lem:clique} it holds that $(\tilde{A}_0+\tilde{A}_2+\tilde{A}_4)\tilde{A}_i\in \text{span}( \tilde{A}_0,\tilde{A}_2,\tilde{A}_4)$ for $i=1,3,5$.
Since the products of any of two in $\{\tilde{A}_1,\tilde{A}_3,\tilde{A}_4\}$ are in $\tilde{\mathcal{A}}$, we have  $\tilde{A}_2\tilde{A}_i\in \text{span}( \tilde{A}_0,\tilde{A}_2,\tilde{A}_4)$ for $i=1,3,5$. 
The matrix $\tilde{A}_2$ has the block form $(J_l\otimes I_f)\otimes J_2$, from which we routinely obtain that  $\tilde{A}_2^2$ is a linear combination of $\tilde{A}_0,\tilde{A}_2,\tilde{A}_4$. 
Thus we have shown 
\begin{align}\label{eq:1}
\tilde{A}_2\tilde{A}_i \in \tilde{\mathcal{A}} \text{ for } i=1,\ldots,5
\end{align} 

And it follows from that $J=\sum_{i=0}^5\tilde{A}_i$ and \eqref{eq:1} that $\tilde{A}_5 \tilde{A}_i$ is in $\tilde{\mathcal{A}}$ for $i=0,1,\ldots,5$.
This completes the proof. 
\end{proof}

Let us denote the second eigenmatrix of $\tilde{\mathfrak{X}}$ by $\tilde{Q}$. 
Then the second eigenmatrix $\tilde{Q}$ is determined by $Q$.
\begin{theorem}\label{thm:class5}
The second eigenmatrix $\tilde{Q}$ of $\tilde{\mathfrak{X}}$ has the following form:
\begin{align*}
\tilde{Q}=\begin{pmatrix}
1&m &n-m-1+\frac{m}{s}&m_3&m_4 & -\frac{m}{s}\\
1&r & -r& m_3 \alpha_3 & m_4 \alpha_4 & -1\\
1&s & -s & 0 & 0 & -1\\
1&r & -r & -m_3 \alpha_3 & -m_4 \alpha_4 &-1\\
1&m &n-m-1+\frac{m}{s} &-m_3&-m_4 &-\frac{m}{s}\\
1&s & -s-1+\frac{m}{s} &0&0 &-\frac{m}{s}
\end{pmatrix}. 
\end{align*}
\end{theorem}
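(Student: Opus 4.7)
The plan is to exploit that $\mathcal{A}\subset\tilde{\mathcal{A}}$ with $\dim\tilde{\mathcal{A}}-\dim\mathcal{A}=1$, so exactly one primitive idempotent $E_i$ of $\mathcal{A}$ splits in $\tilde{\mathcal{A}}$ as $E_i=\tilde{E}_i+\tilde{E}_5$, while the other four remain primitive and coincide with $\tilde{E}_j$. Consequently, for $j\in\{0,1,3,4\}$ the columns of $\tilde{Q}$ inherit the entries of $Q$ in rows $\tilde{A}_i$ with $i\in\{0,1,3,4\}$, so all genuinely new data lives in the columns $\tilde{E}_2,\tilde{E}_5$ and in the new row $\tilde{A}_5=A_2-\tilde{A}_2$.

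I would first pin down which $E_i$ splits by computing the action of the single new generator $\tilde{A}_2$ on each $E_j\mathbb{R}^X$. Writing $\tilde{A}_2=\overline{B}\otimes J_2$ with $\overline{B}=I_f\otimes(J_l-I_l)=\overline{M}-I$ and $\overline{M}=\sum_j\chi_{\overline{Y}_j}\chi_{\overline{Y}_j}^T$ the equivalence matrix of the spread on $\overline{X}$, the $J_2$-factor annihilates the $\phi$-antisymmetric subspace, so $\tilde{A}_2$ acts as $0$ on $E_3\mathbb{R}^X\oplus E_4\mathbb{R}^X$ and hence $E_3,E_4$ stay primitive. On the symmetric part, identified with $\mathbb{R}^{\overline{X}}$, $\tilde{A}_2$ acts as $2\overline{B}$. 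By Lemma~\ref{lem:clique} each $\overline{Y}_j$ is a Delsarte clique of $(\overline{X},\overline{R}_2)$, so the standard Delsarte characterisation puts $\chi_{\overline{Y}_j}$ in $\overline{E}_0\mathbb{R}^{\overline{X}}\oplus\overline{E}_2\mathbb{R}^{\overline{X}}$; then $\overline{E}_1\overline{M}=0$, so $\overline{B}|_{\overline{E}_1}=-I$ and $\tilde{A}_2$ acts as $-2I$ on $E_1\mathbb{R}^X$, keeping $E_1$ primitive. The splitting thus occurs on $E_2\mathbb{R}^X$.

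To pin the splitting down I would use $\overline{M}^2=l\overline{M}$ together with $\operatorname{tr}(\overline{E}_2\overline{M})=f(l-l^2/n)=n-l$ to conclude that $\overline{E}_2\overline{M}\overline{E}_2/l$ is an idempotent of rank $f-1$, so $\overline{B}|_{\overline{E}_2}$ has eigenvalues $l-1$ (multiplicity $f-1$) and $-1$ (multiplicity $n-m-f$), and hence $\tilde{A}_2$ has eigenvalues $2(l-1)$ and $-2$ on $E_2\mathbb{R}^X$. The prescribed entry $\tilde{Q}_{2,5}=-1$ forces $\tilde{E}_5$ onto the $(-2)$-eigenspace, giving $\tilde{m}_5=n-m-f$ and $\tilde{m}_2=f-1$; combining the Delsarte bound $l=1-m/s$ with the quotient-scheme identity $m(r+1)+s(n-m-1)=0$ (forced by the existence of the two-fold cover together with the spread) then yields $\tilde{m}_5=-m/s$ and $\tilde{m}_2=n-m-1+m/s$. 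The remaining entries follow mechanically: the rows $\tilde{A}_0,\tilde{A}_1,\tilde{A}_3,\tilde{A}_4$ on columns $\tilde{E}_0,\tilde{E}_1,\tilde{E}_3,\tilde{E}_4$ copy from $Q$; the new row $\tilde{A}_5$ is supplied by $\tilde{P}_{j,5}=P_{j,2}-\tilde{P}_{j,2}$; and for columns $\tilde{E}_2,\tilde{E}_5$ the matrices $\tilde{A}_i$ with $i\in\{0,1,3,4\}$ act as the common scalar $P_{2,i}$ on both $\tilde{E}_2\mathbb{R}^X,\tilde{E}_5\mathbb{R}^X\subset E_2\mathbb{R}^X$, producing the repeated column entries displayed in the theorem. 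The main obstacle is the multiplicity identification $\tilde{m}_5=-m/s$: one must show that the combinatorial count $n-m-f$ with $f=n/l$ reduces to the algebraic quantity $-m/s$, which is exactly where the parameter identity implied by the two-fold cover together with the spread has to be unpacked.
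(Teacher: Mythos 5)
Your overall strategy is sound and is in essence a self-contained version of what the paper does: the paper defines $\tilde{E}_5$ explicitly as the lift of the Haemers--Tonchev spread idempotent $\frac{1}{l}\overline{M}-\frac{1}{n}J_n$ (where $\overline{M}=\sum_j\chi_{\overline{Y}_j}\chi_{\overline{Y}_j}^T$) and verifies idempotency and orthogonality by citing their Proposition 4.1, whereas you rederive the relevant spectral facts from $\overline{M}$ directly. Your analysis is correct up to the point where $E_2\mathbb{R}^X$ splits into a summand of dimension $f-1$ (the image of $\overline{E}_2\overline{M}\overline{E}_2/l$) and a complementary summand of dimension $n-m-f$, as is the observation that $E_0,E_1,E_3,E_4$ survive because $\tilde{A}_2$ acts on them as a scalar.

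However, the final identification is backwards, and this is a genuine gap rather than a bookkeeping slip. The idempotent $\tilde{E}_5$ of the theorem, with multiplicity $\tilde{Q}_{0,5}=-m/s=l-1$, is the rank-$(f-1)$ summand spanned by the centered clique vectors --- precisely the Haemers--Tonchev idempotent the paper lifts --- not the rank-$(n-m-f)$ summand you assign to it. The identity that actually has to be established is $f-1=-m/s$, equivalently $n=l^2$ with $l=1-m/s$; your claimed reduction of $n-m-f$ to $-m/s$ is false, as the paper's own BCH example shows ($n-m-f=2^{m-1}(2^m-1)$ while $-m/s=2^m-1$), and the auxiliary identity $m(r+1)+s(n-m-1)=0$, though valid in that example, does not close this gap. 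The step ``the prescribed entry $\tilde{Q}_{2,5}=-1$ forces $\tilde{E}_5$ onto the $(-2)$-eigenspace'' also cannot be run as stated: with the paper's literal definition of $\tilde{R}_2$ as the intra-clique part of $R_2$ (valency $2l-2$), that entry together with eigenvalue $-2$ forces $\tilde{m}_5=l-1$, contradicting your own $\tilde{m}_5=n-m-f$. Part of the blame lies with the paper, whose displayed $\tilde{Q}$ is only consistent if the roles of $\tilde{R}_2$ and $\tilde{R}_5$ are interchanged and $n=l^2$ is assumed, but as written your argument does not establish the stated matrix.
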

\begin{proof}
We set $\tilde{E}_i$ by 
\begin{align*}
\tilde{E}_i&=E_i \quad (i=0,1,3,4),\\
\tilde{E}_5&=-\frac{m}{s}(J_{-\frac{m}{s}}-I_{-\frac{m}{s}})\otimes J_2-J_{2n},\\
\tilde{E}_2&=E_2-\tilde{E}_5.  
\end{align*}
It is clear that the sum of all $\tilde{E}_i$ equals to the identity matrix.
We will show that 
\begin{enumerate}
\item $E_2\tilde{E}_5=\tilde{E}_5$.
\item $\tilde{E}_i\tilde{E}_5=\delta_{i,5}\tilde{E}_5$ for $0\leq i\leq 5$.
\end{enumerate}
If the above conditions hold, then $\tilde{E}_i\tilde{E}_2=\delta_{i,2}\tilde{E}_2$ for $0\leq i\leq 5$ and thus  the set $\{\tilde{E}_0,\tilde{E}_1,\ldots,\tilde{E}_5\}$ forms the basis of primitive idempotents of the scheme $\tilde{\mathfrak{X}}$ and we have the desired second eigenmatrix with ordering of $\tilde{E}_0,\tilde{E}_1,\ldots,\tilde{E}_5$. 

We now show that (i) and (ii) hold.  
The matrix $\tilde{E}_5$ belongs to the subalgebra generated by $A\otimes J_2$ where $A$ is an element of the Bose-Mesner algebra of $\overline{\mathfrak{X}}$. 
By \cite[Proposition 4.1]{HT} the corresponding element of $\tilde{E}_5$ in $\overline{\mathfrak{X}}$, $E:=-\frac{m}{s}(J_{-\frac{m}{s}}-I_{-\frac{m}{s}})-J_{n}$, is a primitive idempotent of $\overline{\mathfrak{X}}$.
Thus (i) follows from $E\overline{E}_2=0$. 

Similarly we have $\tilde{E}_i\tilde{E}_5=0$ for $i=0,1$. 
And it is obvious that $\tilde{E}_5^2=\tilde{E}_5$.  
The image of $\tilde{E}_5$ is in $\{(x,x)\mid x\in\mathbb{R}^n\}$, while the images of $\tilde{E}_3$, $\tilde{E}_4$ are in $\{(x,-x)\mid x\in\mathbb{R}^n\}$.
Thus the images of $\tilde{E}_5$ and $\tilde{E}_i$ ($i=3,4$) are orthogonal, which implies that $\tilde{E}_i\tilde{E}_5=0$ for $i=3,4$. 
This proves (ii).
\end{proof}

\begin{remark}
\begin{enumerate}
\item The valencies of $\tilde{\mathfrak{X}}$ are 
\begin{align*}
(\tilde{k}_i)_{i=0}^5=(1,k,2(n-k-1)+\frac{m}{s},k,1,-\frac{m}{s}).
\end{align*} 
Using the formula $\tilde{Q}_{ij}/\tilde{m}_i=\tilde{P}_{ji}/\tilde{k}_j$ \cite[Theorem~3.5]{BI},  we obtain the first eigenmatrix $\tilde{P}$ of the scheme of $\tilde{\mathfrak{X}}$:
\begin{align*}
\tilde{P}=\begin{pmatrix}
1&k & 2(n-k-1)+\frac{m}{s}&k &1 & -\frac{m}{s}\\
1&\frac{k r}{m} & \frac{m+2s(n-k-1)}{m}& \frac{k r}{m} & 1  & -1\\
1& \tilde{P}_{2,1} & \frac{s(m+2s(n-k-1))}{m(s-1)+s(n-1)} & \tilde{P}_{2,1} & 1 & \frac{m(m-s(s+1))}{s(m(s-1)+s(n-1))}\\
1&\alpha k & 0 & -\alpha k  & -1 &0\\
1&-\frac{1}{\alpha} &0 &\frac{1}{\alpha}&-1 &0\\
1&\frac{k s}{m} & \frac{m+2s(n-k-1)}{m} &\frac{k s}{m}&1 &-\frac{m}{s}
\end{pmatrix}, 
\end{align*}
 where $\tilde{P}_{21}=\frac{k r s}{m(s-1)+s(n-1)}$
\item The association scheme $\tilde{\mathfrak{X}}$ is imprimitive with respect to the equivalence relation $\tilde{R}_0\cup\tilde{R}_4$. 
Its quotient scheme coincides with the association scheme appeared in \cite[Proposition 4.1]{HT} whose  spread is the quotient of the maximal $\{0,2,4\}$-clique decomposition supposed to exist. 
\end{enumerate}
\end{remark}

\section{Examples and mutually unbiased weighing matrices}
We give an obvious upper bound on the size of $\{0,2,4\}$-clique in terms of the multiplicities.
Moreover when equality holds, we connect the scheme of class five with mutually unbiased weighing matrices.
\begin{theorem}\label{thm:MUWM}
For the association scheme $\tilde{\mathfrak{X}}$ it holds that $\tilde{k}_0+\tilde{k}_4+\tilde{k}_5\leq \min\{2m_3,2m_4\}$.
If equality holds for $i=3$ or $4$, then there exists a set of mutually unbiased weighing matrices of size $-m/(2s)$ and  weight $1/\alpha_i$.
\end{theorem}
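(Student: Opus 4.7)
My plan is to embed each clique $Y_j$ as an orthogonal family inside $\tilde{E}_i\mathbb{R}^X$ for $i\in\{3,4\}$, and then to read the cross-block inner products as the defining data of a set of mutually unbiased weighing matrices. The vital input, recorded in the $i$-th column of $\tilde{Q}$ from Theorem~\ref{thm:class5}, is the pattern $\tilde{Q}_{0,i}=m_i$, $\tilde{Q}_{4,i}=-m_i$, $\tilde{Q}_{2,i}=\tilde{Q}_{5,i}=0$, and $\tilde{Q}_{1,i}=-\tilde{Q}_{3,i}=m_i\alpha_i$.

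For the inequality, I will fix $i\in\{3,4\}$ and put $w_x:=\tilde{E}_i e_x$ for each $x\in X$, a vector in the $m_i$-dimensional space $\tilde{E}_i\mathbb{R}^X$. Since $(\tilde{E}_i)_{xy}=\tilde{Q}_{j,i}/(2n)$ whenever $(x,y)\in\tilde{R}_j$, the column values above yield
\[
\|w_x\|^2=\tfrac{m_i}{2n},\qquad \langle w_x,w_{\phi(x)}\rangle=-\tfrac{m_i}{2n},\qquad \langle w_x,w_y\rangle=0 \text{ for } (x,y)\in\tilde{R}_2\cup\tilde{R}_5.
\]
The Cauchy--Schwarz equality in the second identity will force $w_{\phi(x)}=-w_x$. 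Picking a transversal $Y_j^+\subset Y_j$ meeting each $\phi$-orbit once, the family $\{w_x:x\in Y_j^+\}$ consists of $|Y_j|/2$ pairwise orthogonal nonzero vectors, since every non-$\phi$ off-diagonal pair inside $Y_j$ is an $\tilde{R}_2$-pair. Hence $|Y_j|/2\leq m_i$. The valencies from the Remark identify $|Y_j|$ with $\tilde{k}_0+\tilde{k}_4+\tilde{k}_5$, giving $\tilde{k}_0+\tilde{k}_4+\tilde{k}_5\leq 2m_i$ for each $i\in\{3,4\}$ and thus the stated bound.

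Assuming equality for some $i$, the family $\{w_x:x\in Y_j^+\}$ will be an orthogonal basis of $\tilde{E}_i\mathbb{R}^X$ for every $j$, with common norm. Cross-block products $\langle w_x,w_y\rangle$ for $x\in Y_j^+$, $y\in Y_{j'}^+$ with $j\neq j'$ vanish whenever $(x,y)\in\tilde{R}_5$ and equal $\pm m_i\alpha_i/(2n)$ for $(x,y)\in\tilde{R}_1\cup\tilde{R}_3$. Writing each basis as the rows of a normalised $m_i\times m_i$ matrix $M_j$ in a suitable coordinate frame on $\tilde{E}_i\mathbb{R}^X$, this translates into: each $M_j$ is orthogonal of size $-m/(2s)$, while each cross-Gram $M_jM_{j'}^{\top}$ ($j\neq j'$) has entries in $\{0,\pm 1/\alpha_i\}$ with constant nonzero modulus---the defining axiom of a set of mutually unbiased weighing matrices of weight $1/\alpha_i$.

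The principal obstacle is not the inequality but the final translation: I must verify that, after rescaling, the matrices $M_j$ actually land in $\{-1,0,+1\}$ entrywise (rather than merely in a $\{0,\pm 1\}$-modulus set), and that the support pattern distinguishing $\tilde{R}_1\cup\tilde{R}_3$-entries from $\tilde{R}_5$-entries matches the prescribed weighing-matrix sparsity. For this I will invoke the distribution data on $\tilde{R}_1$-, $\tilde{R}_3$- and $\tilde{R}_5$-neighbours of a vertex with respect to the partition $\{Y_j\}_j$ furnished by the complete-regularity statement in Lemma~\ref{lem:clique}, together with $\phi$-equivariance to pin down signs.
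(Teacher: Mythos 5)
Your argument is essentially the paper's own proof: the paper likewise embeds $X$ into the eigenspace of $\tilde{E}_3$ or $\tilde{E}_4$, observes that each $\{0,2,4\}$-clique maps to vectors with inner products $\pm1,0$ (hence a subset of an orthonormal basis and its antipode, giving size at most $2m_i$), and in the equality case reads the resulting cross-polytope decomposition of the antipodal spherical code as a set of mutually unbiased weighing matrices of size $-m/(2s)$ and weight $1/\alpha_i$. The only difference is that the final translation you flag as the principal obstacle is outsourced in the paper to \cite[Proposition 3.5]{NS}, so your plan to verify the $\{0,\pm1\}$ entries and the support pattern by hand via Lemma~\ref{lem:clique} amounts to re-proving that cited result.
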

\begin{proof}
The second eigenmatrix shows that the embedding of the vertex set into eigenspaces corresponding to $\tilde{E}_3$ or $\tilde{E}_4$ is an antipodal spherical code with degree $4$. 
We denote this embedding by $\phi$.
In this embedding each $\{0,2,4\}$-clique is a set of vectors such that their usual inner products are $\pm1$, $0$. 
This means that it is a subset of a union of an orthonormal basis and its antipodal one, thus the size of a $\{0,2,4\}$-clique is at most $2m_i$ ($i=3,4$).
Therefore we have the desired bound. 

Assume that equality holds for $i=3$ or $4$.
The spherical code $\phi(X)$ obtained by $\tilde{E}_i$ is an antipodal set with the angle set $\{0,-1,\pm\alpha_i\}$. 
Each $\{0,2,4\}$-clique represents a union of an orthogonal basis and its antipodal one and a partition of the vertex set $X$ by $\{0,2,4\}$-cliques corresponds to a cross polytope decomposition. 
By \cite[Proposition 3.5]{NS} we obtain mutually unbiased weighing matrices of size $-m/(2s)$ and weight $1/\alpha_i$.  
\end{proof}
Finally we give an infinitely many examples fitting into Theorem~\ref{thm:MUWM}. 
\begin{example}
Let $m$ be a positive odd integer. 
Let $\mathcal{B}(2,m)$ be a narrow-sense BCH code with designed distance $5$.
Let $C$ be a code generated by the extended code of $\mathcal{B}(2,m)^\perp$ and the all-ones vector.
Then the dual code $\mathcal{B}(2,m)^\perp$ has the weights $\{0,2^{m-1}-2^{(m-1)/2},2^{m-1},2^{m-1}+2^{(m-1)/2},2^m\}$ \cite[Table 11.2]{HV}.
Since $\mathcal{B}(2,m)^\perp$ has a minimum distance at least $5$, 
$C$ is an orthogonal array of strength at least $5$. 
Moreover $C$ is a linear code containing all ones vector.

The binary Hamming scheme of length $2^m$ and the code $C$ have the property that there is a bijection mapping a codeword of weight $j$ to a codeword of weight $2^m-j$ and the code $C$ is closed under the bijection. 
By replacing the sphere to the binary Hamming scheme and the antipodal spherical design to the orthogonal array which is closed under the bijection above with strength $t$ and degree $s$ satisfying $t\geq 2s-3$,  we can apply \cite[Theorems~1.1, 1.2]{BB} to this case.
Then we obtain the scheme of class $4$ from the code $C$ with the second eigenmatrix $Q$ as follows:
\begin{align*}Q=
\begin{pmatrix}
1 &2^{m-1}(2^m-1)&(2^{m-1}+1)(2^m-1)&2^m&2^m(2^m-1)\\
1 &2^{m-1}&-2^{m-1}-1&2^{(m+1)/2}&-2^{(m+1)/2}\\
1 &-2^{m-1}&2^{m-1}-1&0&0\\
1 &2^{m-1}&-2^{m-1}-1&-2^{(m+1)/2}&2^{(m+1)/2}\\
1 &2^{m-1}(2^m-1)&(2^{m-1}+1)(2^m-1)&-2^m&-2^m(2^m-1)
\end{pmatrix}. 
\end{align*}
Here the scheme is $Q$-polynomial with respect to the ordering of primitive idempotents $E_0,E_3,E_1,E_4.E_5$.  The ordering of eigenspaces are arranged as same as the ordering in Theorem~\ref{thm:asc5}.

The code $C$ has the first order Reed-Muller code $RM(1,m)$ as a subcode whose size is $2^{m+1}$. 
Since $RM(1,m)$ has only the weight $\{0,2^{m-1},2^m\}$, it is a $\{0,2,4\}$-clique with the ordering of adjacency matrices determined in the second eigenmatrix above.
Moreover it attains the bound in Lemma~\ref{lem:clique}, where the values $2(1-P_{0,2}\theta)$ is $2^{m+1}$. 
The same is true for each coset of $C$ by $RM(1,m)$. 
Thus $C$ with cosets by $RM(1,m)$ has a fission scheme of class $5$ by Theorem~\ref{thm:asc5} and its second eigenmatrix is given as follows:
\begin{align*}\tilde{Q}=
\begin{pmatrix}
1&2^{m-1}(2^m-1)&2^{m-1}(2^m-1)&2^m&2^m(2^m-1)& 2^m-1\\
1&2^{m-1}&-2^{m-1}-1&2^{(m+1)/2}&-2^{(m+1)/2}&-1\\
1&-2^{m-1}&2^{m-1}-1&0&0&-1\\
1&2^{m-1}&-2^{m-1}-1&-2^{(m+1)/2}&2^{(m+1)/2}&-1\\
1&2^{m-1}(2^m-1)&2^{m-1}(2^m-1)&-2^m&-2^m(2^m-1)&2^m-1\\
1&-2^{m-1}&-2^{m-1}&0&0&2^m-1
\end{pmatrix}. 
\end{align*}
  
\end{example}


\begin{thebibliography}{99}
\bibitem{BB}
E. Bannai and E. Bannai, 
On antipodal spherical $t$-designs of degree $s$ with $t\geq 2s-3$, 
Journal of Combinatorics, Information and System Sciences, (2009) 34. 33-50:
arXiv:0802.2905v1.

\bibitem{BI}
E. Bannai, T. Ito, Algebraic Combinatorics I: Association Schemes,
{Benjamin/Cummings, Menro Park, CA,} 1984.


\bibitem{BKR}
D. Best, H. Kharaghani, and H. Ramp, 
Mutually Unbiased Weighing Matrices, 
to appear in {\it Des. Codes Cryptogr.}

\bibitem{B}
A. E. Brouwer, 
Distance regular graphs of diameter $3$ and strongly regular graphs, 
Discrete Math, 49 (1984), 101--103.

\bibitem{HT}
W. H. Haemers and V. D. Tonchev, 
Spreads in strongly regular graphs, 
Des. Codes and Crypt., (1996) 8, 145--157. 

\bibitem{HKO}
W. H. Holzmann, H. Kharaghani and W. Orrick, On the real unbiased Hadamard matrices, 
Combinatorics and graphs, 243--250, Contemp. Math. 531, Amer. Math. Soc., Providence, RI, 2010.

\bibitem{HV}
W. C. Huffman and V. Pless, Fundamentals of error-correcting codes, 
Cambridge University Press, Cambridge, 2003. xviii+646 pp.

\bibitem{NS}
H. Nozaki and S.Suda, 
Weighing matrices and spherical codes, 
preprint, arXiv:1309.3892.




\end{thebibliography}
\end{document}